\newtheorem{thrm}{Theorem}[section]
\newtheorem{lem}[thrm]{Lemma}
\newtheorem{prop}[thrm]{Proposition}
\theoremstyle{definition}
\newtheorem{remark}[thrm]{Remark}
\newtheorem{example}[thrm]{Example}
\numberwithin{equation}{section}
\newcommand{\M}{\mathsf{M}}
\newcommand{\Sc}{\mathsf{S}}
\newcommand{\U}{\mathsf{U}}
\newcommand{\Gr}{\mathrm{Gr}}
\newcommand{\Vol}{\mathrm{Vol}}
\newcommand{\des}{\mathrm{des}}
\author{A.~U. Ashraf}
\address{
Mathematics Department\\
University of Western Ontario\\
London, Ontario, Canada}
\email{aashra9@gmail.com}
\thanks{This work was done while the author was a graduate student at University of Western Ontario}
\keywords{Matroid Polytopes, Schubert matroids}
\subjclass{Primary 05B35 , Secondary 52B40}
\begin{document}

\title[Ahmed U.Ashraf]{An alternative approach to Volumes of Matroid Polytopes}

\begin{abstract}
    We present a new algorithm for computing the volume of an arbitrary matroid base polytope. We provide two applications of this approach: a relation between the volume of the base polytope of a matroid $\M$ and its relaxation $\M'$, and a formula for the volume of an arbitrary sparse paving matroid base polytope. 
\end{abstract}
\maketitle
\tableofcontents

\section{Introduction} \label{sec:intro}
For a point $p$ in the Grassmannian $\Gr_{k, n}(\mathbb{C})$ with a representative $k \times n$ matrix $A_p$, the closure of the torus orbit of $p$ is a toric variety denoted by $X_p$. It is known that the corresponding lattice polytope of $X_p$ is the matroid polytope, $P_\M$, of the matroid $\M$ of the matrix $A_p$. Thus, the degree of this variety is given by the volume of $P_\M$. This motivates the interest in computing the volume of a general matroid polytope. There has been plenty of work done on this problem \cite{ardila-benedetti-doker} and its special cases \cite{lam-postnikov-I}. Here we present another approach for the general case that is more amenable when the cyclic flats of the matroid are known. We assume familiarity with matroid theory and suggest the standard reference \cite{oxley-matroid} for further inquiry.

Let $\M$ be a matroid of rank $d+1$ on a set $E$ of size $n+1$. The \emph{matroid (base) polytope} $P_\M$ is defined as the convex hull of the incidence vectors of bases of $\M$; that is,
\begin{align}
    P_\M &= \mathrm{conv}\bigg(\sum_{i \in B} \mathbf{e}_i: B \in \mathcal{B}(\M)\bigg) \subseteq \mathbb{R}^E
\end{align}
where $\mathcal{B}(\M)$ is the collection of bases of $\M$. The dimension and the facets of $P_\M$ were characterised combinatorially in \cite{feichtner-sturmfels}. In particular, they showed that the dimension of $P_\M$ is $n+1-c$, where $c$ is the number of connected components of $\M$. At about the same time, Ardila and Klivans \cite{ardila-klivans} gave a combinatorial characterisation of all faces of $P_\M$. It is worth mentioning that the face lattice of $P_\M$ is still not entirely understood.

We use $\Vol(P)$ to denote the normalised volume of the polytope $P$ in its affine hull $\mathrm{aff}(P)$, fixing 1 to be the volume of the unit hypercube in $\mathrm{aff}(P)$. We use the terminology \emph{matroid base polytope} and \emph{base polytope of a matroid} interchangeably. From the definition, it is clear that the volume of a matroid polytope is a matroid isomorphism invariant. 

In the rest of the paper, we will fix our underlying set $E$ to be the set $\llbracket n \rrbracket := \{0,1,\ldots, n\}$, and assume $n \geq 1$. In what follow, we assume the rank of our matroid is $d+1$, where $d$ is a positive integer. We also note that if $\M = \M_1 \oplus \cdots \oplus \M_k$ is a decomposition of $\M$ into its connected components, then $P_\M = P_{\M_1} \times \cdots \times P_{\M_k}$. This implies that $\Vol(P_\M) = \Vol(P_{\M_1}) \cdots \Vol(P_{\M_k})$. Hence, we further reduce our discussion to the connected case.

We present a combinatorial approach to compute the volume of a (connected) matroid polytope. This approach is different from the one presented by Ardila, Benedetti and Doker in \cite{ardila-benedetti-doker}. As an application, we derive the relation between volume of a base polytope of a matroid $\M$ and its relaxation $\M'$. Using this relation inductively, we give a formula for volumes of sparse paving matroid polytopes. Note that the volume of a sparse paving matroid polytope can also be derived from a result of Joswig and Schr\"oter (Theorem 26 in \cite{joswig-schroter}) on split matroids. Our main result is primarily based on Hampe's relation (Theorem 3.12 in \cite{hampe-matroid}) between a loopless matroid $\M$ and Schubert matroids associated with $\M$. We state the main theorem below and provide its proof in Section \ref{sec:cycflat}.

\begin{thrm}\label{thrm:main}
Let $\M$ be a connected matroid on $\llbracket n \rrbracket$ of rank $d+1$. Then the normalised volume of its base polytope is given by
\begin{align}
    \mathrm{Vol}(P_\M) &= \frac{1}{n!} \sum_{ \mathcal{F} } \mu(\mathcal{F}) \delta_{\preceq}(\mathbf{b}_{\mathcal{F}}) \label{eq:main}
\end{align}
where the sum is over all anchored chains of cyclic flats.
\end{thrm}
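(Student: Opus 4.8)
The plan is to use Hampe's relation to reduce the computation to Schubert matroids, whose base polytopes admit an explicit unimodular triangulation, and then to apply volume as a valuation. First I would recall that for a connected matroid $\M$ the lattice of cyclic flats $\mathcal{Z}(\M)$ is well defined and that the matroids whose cyclic flats form a single chain are exactly the Schubert (nested) matroids. An anchored chain $\mathcal{F}\colon \emptyset = F_0 \subsetneq F_1 \subsetneq \cdots \subsetneq F_k = \llbracket n \rrbracket$ of cyclic flats, together with the ranks $r(F_i)$, then determines a Schubert matroid $\Sc_{\mathcal{F}}$ on $\llbracket n \rrbracket$ of rank $d+1$. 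Hampe's Theorem 3.12 expresses the indicator function $\mathbf{1}_{P_\M}$ in the valuative (indicator-function) group as a signed sum $\sum_{\mathcal{F}} \mu(\mathcal{F})\,\mathbf{1}_{P_{\Sc_{\mathcal{F}}}}$ over chains in $\mathcal{Z}(\M)$, the coefficients $\mu(\mathcal{F})$ being the M\"obius-type numbers read off the lattice of cyclic flats.

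Next I would apply the normalised volume. Since $\Vol$ is a translation-invariant valuation that vanishes on polytopes of dimension strictly less than $\dim P_\M = n$ (using the connectedness reduction from the introduction, which also guarantees $\M$ is loopless), applying $\Vol$ to Hampe's identity kills every term whose Schubert matroid is not full-dimensional. Exactly the anchored chains --- those running from $\emptyset$ to $\llbracket n \rrbracket$ --- survive, giving
\[ \Vol(P_\M) = \sum_{\mathcal{F} \text{ anchored}} \mu(\mathcal{F})\,\Vol(P_{\Sc_{\mathcal{F}}}). \]
It then remains to evaluate $\Vol(P_{\Sc_{\mathcal{F}}})$ and match it with $\tfrac{1}{n!}\delta_\preceq(\mathbf{b}_{\mathcal{F}})$.

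The heart of the argument, and the step I expect to be the main obstacle, is computing the volume of a Schubert matroid base polytope. Here I would extract from the chain $\mathcal{F}$ the vector $\mathbf{b}_{\mathcal{F}}$ recording the successive jumps $(|F_i| - |F_{i-1}|,\ r(F_i) - r(F_{i-1}))$ in size and rank, and then produce a unimodular triangulation of $P_{\Sc_{\mathcal{F}}}$ --- for instance by identifying it, up to a lattice-preserving map, with an order polytope, or by pulling along a shelling coming from the nested structure. Because $\dim P_\M = n$ and the triangulation is unimodular, each top-dimensional simplex contributes volume $\tfrac{1}{n!}$, so $\Vol(P_{\Sc_{\mathcal{F}}}) = \tfrac{1}{n!}\cdot(\#\text{simplices})$, and the count of simplices should be exactly the descent statistic $\delta_\preceq(\mathbf{b}_{\mathcal{F}})$, matching the $P$-partition / linear-extension interpretation of order-polytope volumes. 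The delicate points are verifying that the triangulation is genuinely unimodular in the lattice of $\mathrm{aff}(P_{\Sc_{\mathcal{F}}})$, and checking that the statistic produced by the triangulation agrees on the nose with the descents prescribed by $\mathbf{b}_{\mathcal{F}}$; once these are in place, substituting into the displayed identity yields \eqref{eq:main}.
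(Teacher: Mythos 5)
Your skeleton is the same as the paper's: use Hampe's relation to write the indicator function $[P_\M]$ as a signed M\"obius sum of indicator functions of Schubert matroid base polytopes indexed by chains of cyclic flats, then apply volume, which is linear on indicator functions, to deduce \eqref{eq:main}. The one genuine gap is precisely the step you flag as the heart of the argument: the evaluation $\Vol(P_{\Sc(\mathbf{b})}) = \tfrac{1}{n!}\,\delta_\preceq(\mathbf{b})$ for connected Schubert matroids. You propose to prove this by exhibiting a unimodular triangulation, ``for instance by identifying it with an order polytope, or by pulling along a shelling,'' but neither is carried out, and the order-polytope identification is not available in general --- already the hypersimplex $\triangle_{2,4}$ is an octahedron, which is not combinatorially an order polytope; the classical cube-slicing argument for hypersimplices is only piecewise-linear. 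What you are trying to establish is exactly the theorem of Lam and Postnikov on alcoved polytopes, which the paper does not reprove but simply quotes as Equation \eqref{eq:ScVol}: Schubert matroid polytopes are alcoved, the alcove triangulation is unimodular, and the number of alcoves is the descent statistic $\delta_\preceq(\mathbf{b})$. Citing \cite{lam-postnikov-I} closes your gap; deriving it from scratch would amount to reproducing their proof.

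Two smaller points. First, your dimension argument for discarding non-anchored chains is a red herring: anchoring is only the convention that a chain contains $\varnothing$ and $\llbracket n \rrbracket$, and both are cyclic flats of a connected (hence loopless and coloopless) matroid, so every chain of proper cyclic flats extends uniquely to an anchored one; there is no residual class of terms that needs to be killed by a volume-vanishing argument. Second, quoting Hampe's Theorem 3.12 as if it were literally an identity of indicator functions of base polytopes deserves justification, since his statement is phrased in the intersection ring. The paper avoids this issue by verifying the identity pointwise: the sum $\chi_\M = \sum_{\mathcal{F}} \mu(\mathcal{F})[P_{\mathcal{F}}]$ equals $1$ on $P_\M$ by inclusion--exclusion, while at a point of $\triangle_{d+1,n+1} \setminus P_\M$ the chains whose polytopes contain that point form a proper order filter of $\Gamma$, over which the M\"obius sum vanishes by Hampe's Equations (1) and (2) in \cite{hampe-matroid}. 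Your write-up needs either this pointwise check or an explicit valuative statement extracted from Hampe's paper.
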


When the lattice of cyclic flats of a given matroid is known, we can explicitly compute this volume. In the special case of connected sparse paving matroids, we can reduce the summation to a closed formula.

\begin{thrm}\label{thrm:spav}
Let $\M_\alpha$ be a connected sparse paving matroid on $\llbracket n \rrbracket$ of rank $d+1$ with $\alpha$ circuit-hyperplanes. Then
\begin{align}
    \mathrm{Vol}(P_{\M_\alpha}) &= \frac{1}{n!} \left( A_{n, d} - \alpha \binom{n-1}{d}
    \right)
\end{align}
where $A_{n, d}$ is the $(n,d)$-th Eulerian number: the number of permutations of $[n] := \{1,2,\ldots,n\}$ with $d$ descents.
\end{thrm}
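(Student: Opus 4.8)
The plan is to prove the formula by comparing $P_{\M_\alpha}$ with the hypersimplex and accounting for each circuit-hyperplane through a single elementary relaxation move. The starting point is the uniform matroid $\U_{d+1,n+1}$, the unique sparse paving matroid on $\llbracket n\rrbracket$ of rank $d+1$ with $\alpha = 0$ circuit-hyperplanes; its base polytope is the hypersimplex $\Delta_{d+1,n+1}$, whose relative volume is the classical evaluation $\Vol(\Delta_{d+1,n+1}) = A_{n,d}/n!$. Thus the theorem is equivalent to the claim that, relative to the hypersimplex, each circuit-hyperplane of $\M_\alpha$ accounts for a loss of exactly $\tfrac{1}{n!}\binom{n-1}{d}$ in volume.

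The core step is to make the relaxation relation precise. Fix a circuit-hyperplane $H$ of a sparse paving matroid $\M$ and let $\M'$ be its relaxation, so that $\mathcal{B}(\M') = \mathcal{B}(\M)\cup\{H\}$ and hence $P_{\M'} = \operatorname{conv}(P_\M \cup \{\mathbf{e}_H\})$. Since $H$ is a circuit-hyperplane of rank $d$, every basis $B$ of $\M$ satisfies $|B\cap H|\le d$, so $\sum_{i\in H}x_i \le d$ is a facet-defining valid inequality for $P_\M$ while $\mathbf{e}_H$ violates it. Consequently $P_{\M'}\setminus P_\M$ is the pyramid with apex $\mathbf{e}_H$ over the facet $F_H = P_\M\cap\{\sum_{i\in H}x_i = d\}$. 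I would then identify $F_H$ explicitly: its vertices are the bases $(H\setminus\{i\})\cup\{j\}$ with $i\in H$, $j\notin H$, and writing $\mathbf{e}_{(H\setminus i)\cup j} = \mathbf{e}_H - \mathbf{e}_i + \mathbf{e}_j$ exhibits $F_H$ as the Minkowski sum $\operatorname{conv}\{-\mathbf{e}_i : i\in H\} + \operatorname{conv}\{\mathbf{e}_j : j\notin H\}$, i.e. a direct product $\Delta^{d}\times\Delta^{n-d-1}$ of simplices lying in complementary coordinate subspaces.

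With this description the volume of the added region is a short computation. The product $\Delta^{d}\times\Delta^{n-d-1}$ has lattice-normalized volume $\binom{d+(n-d-1)}{d}=\binom{n-1}{d}$, and the linear functional $\sum_{i\in H}x_i$ is primitive on the lattice $\{x\in\mathbb{Z}^{n+1}:\sum_i x_i = 0\}$, so $\mathbf{e}_H$ sits at lattice height exactly $1$ above $\operatorname{aff}(F_H)$; coning a unimodular triangulation of $F_H$ from $\mathbf{e}_H$ then shows the pyramid has lattice-normalized volume $\binom{n-1}{d}$, i.e. relative volume $\tfrac{1}{n!}\binom{n-1}{d}$. This is the promised relation $\Vol(P_{\M'}) = \Vol(P_\M) + \tfrac{1}{n!}\binom{n-1}{d}$.

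Finally I would assemble the global statement. One clean option is a direct dissection: the $\alpha$ pyramids $\operatorname{Pyr}(F_{H_t},\mathbf{e}_{H_t})$ sit inside $\Delta_{d+1,n+1}$ and have pairwise disjoint interiors, because the sparse paving hypothesis forces $|H_s\cap H_t|\le d-1$ for $s\ne t$, so that $\sum_{i\in H_s}x_i>d$ and $\sum_{i\in H_t}x_i>d$ cannot hold simultaneously on the hypersimplex (summing the two would exceed the bound $2d$ coming from $\sum_i x_i = d+1$ and $0\le x_i\le 1$); additivity of volume then yields the formula. Equivalently one may iterate the relaxation relation, relaxing the circuit-hyperplanes of $\M_\alpha$ one at a time until reaching $\U_{d+1,n+1}$, noting that each relaxation keeps the matroid sparse paving and reduces the circuit-hyperplane count by one. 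I expect the main obstacle to be the middle step: pinning down the added region as a pyramid over $\Delta^{d}\times\Delta^{n-d-1}$ and justifying the height-one primitivity claim so that its relative volume is exactly $\tfrac{1}{n!}\binom{n-1}{d}$; once this local computation is secured, both the disjointness bookkeeping and the induction are routine.
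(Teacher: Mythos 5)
Your proof is correct, but it takes a genuinely different route from the paper's. The paper obtains this theorem as a corollary of Theorem \ref{thrm:main}: for a sparse paving matroid the lattice of cyclic flats has rank two with the circuit-hyperplanes as atoms, so the M\"obius sum over anchored chains collapses to $\frac{1}{n!}\bigl[\sum_{i=1}^{\alpha}\delta_\preceq(1^d010^{n-d-1})-(\alpha-1)\delta_\preceq(1^{d+1}0^{n-d})\bigr]$, which Lemmas \ref{lem:binomial} and \ref{lem:eulerian} convert into the stated formula; all the geometry is outsourced to the Lam--Postnikov formula \eqref{eq:ScVol} and Hampe's M\"obius identity. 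You argue directly on the polytopes instead: starting from $\Vol(\triangle_{d+1,n+1})=A_{n,d}/n!$, each circuit-hyperplane $H$ contributes a pyramid with apex $\mathbf{e}_H$ over $F_H\cong\Delta^{d}\times\Delta^{n-d-1}$, whose normalized volume is $\binom{n-1}{d}$ by the height-one coning argument, and the $\alpha$ pyramids have pairwise disjoint interiors because distinct circuit-hyperplanes meet in at most $d-1$ elements. This is essentially the dissection the paper only sketches after its proof (the inductive application of Theorem \ref{thrm:relaxation}) and in Remark \ref{rem:van}, where it is attributed to Joswig--Schr\"oter; the difference is that you prove the local volume step from scratch, so your argument is self-contained, bypasses Theorem \ref{thrm:main} and the cyclic-flat machinery entirely, and yields Theorem \ref{thrm:relaxation} as a by-product --- in the correct normalization $\Vol(P_{\M'})=\Vol(P_{\M})+\frac{1}{n!}\binom{n-1}{d}$, whereas the paper's statement of Theorem \ref{thrm:relaxation} omits the factor $\frac{1}{n!}$ that its own proof produces. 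What the paper's route buys is uniformity (the same formula handles any matroid whose cyclic flats are known); what yours buys is elementary geometric transparency, at the price of taking the classical hypersimplex volume as input (which the paper also needs, via Lemma \ref{lem:eulerian}). Two details worth tightening: to see that a single relaxation adds exactly one pyramid, note that $\mathbf{e}_H$ violates no facet inequality of $P_{\M}$ other than $\sum_{i\in H}x_i\le d$, which follows from the same intersection bound --- and that bound in fact holds for any two circuit-hyperplanes of any matroid, since $|H\cap H'|=d$ would force $H'\subseteq\mathrm{cl}(H\cap H')\subseteq H$; the sparse paving hypothesis is what you really need to guarantee that every set $(H\setminus\{i\})\cup\{j\}$ is a basis, so that $F_H$ is the full product of simplices and so that iterated relaxation terminates at the uniform matroid.
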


We prove this in Section \ref{sec:sparelax} as a consequence of Theorem \ref{thrm:main}. Another interesting application of this result is the relation between the volume of a base polytope of a matroid $\M$ and its relaxation $\M'$.

\begin{thrm}\label{thrm:relaxation}
Let $\M$ be a matroid on $\llbracket n \rrbracket$ of rank $d+1$ with a circuit-hyperplane $H$, and let $\M'$ be the relaxation of $\M$ with respect to $H$. Then
\begin{align}
    \Vol(P_{\M'}) = \Vol(P_{\M}) + \binom{n-1}{d}  
\end{align}
\end{thrm}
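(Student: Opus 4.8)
The plan is to argue geometrically that passing from $\M$ to $\M'$ glues a single pyramidal cap onto the base polytope, and then to compute the volume of that cap. Since relaxation only promotes the circuit-hyperplane $H$ to a basis, we have $\mathcal{B}(\M') = \mathcal{B}(\M) \cup \{H\}$, so that $P_{\M'} = \mathrm{conv}\big(P_\M \cup \{\mathbf{v}_H\}\big)$, where $\mathbf{v}_H = \sum_{i \in H}\mathbf{e}_i$. Because $H$ is a hyperplane of rank $d$ (and, being a circuit, has $|H| = d+1$), the inequality $\sum_{i\in H} x_i \le d$ is valid on $P_\M$ and cuts out the facet $F$ corresponding to the flat $H$; on the other hand $\mathbf{v}_H$ satisfies $\sum_{i\in H} x_i = d+1$, so $\mathbf{v}_H$ lies just beyond this facet. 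First I would show that $F$ is the \emph{only} facet of $P_\M$ visible from $\mathbf{v}_H$, so that $P_{\M'} = P_\M \cup Q$ with $Q := P_{\M'} \cap \{\,\sum_{i\in H}x_i \ge d\,\} = \mathrm{conv}(\{\mathbf{v}_H\} \cup F)$ and $P_\M \cap Q = F$; this gives $\Vol(P_{\M'}) = \Vol(P_\M) + \Vol(Q)$, reducing the theorem to the computation of $\Vol(Q)$.

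To verify the visibility claim it suffices to check that $\mathbf{v}_H$ satisfies every facet inequality of $P_\M$ except the one indexed by $H$. The box constraints $0\le x_i\le 1$ hold trivially, and for a flat $G$ the rank inequality from \cite{feichtner-sturmfels} reads $|H\cap G| \le r_\M(G)$. Since $H$ is a circuit, any subset of $H$ of size at most $d$ is independent, so $|H\cap G|\le d$ forces $|H\cap G| = r_\M(H\cap G)\le r_\M(G)$; and $|H\cap G| = d+1$ would force $H\subseteq G$, hence (as $H$ is a hyperplane) $G = H$. Thus $H$ indexes the unique violated inequality, confirming that the added region is exactly the pyramid $Q$.

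Next I would identify the base $F$. Restricting and contracting at the circuit-hyperplane gives $\M|H = \U_{d,d+1}$ and $\M/H = \U_{1,n-d}$; here reducing to the connected case guarantees $|E\setminus H|\ge 2$, so both minors are connected and $F$ is a genuine facet by \cite{feichtner-sturmfels}. Consequently $F = P_{\U_{d,d+1}} \times P_{\U_{1,n-d}}$ is a product of a $d$-dimensional and an $(n-d-1)$-dimensional unimodular simplex, and $Q$ is the pyramid over $F$ with apex $\mathbf{v}_H$ sitting at lattice height one (the functional $\sum_{i\in E\setminus H} x_i$ drops from $1$ on $F$ to $0$ at $\mathbf{v}_H$). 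Triangulating $\Delta_d \times \Delta_{n-d-1}$ by the standard staircase unimodular triangulation into $\binom{n-1}{d}$ simplices and coning each over $\mathbf{v}_H$ yields a unimodular triangulation of $Q$ into $\binom{n-1}{d}$ simplices, whence $\Vol(Q) = \binom{n-1}{d}$ and the theorem follows.

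The main obstacle is the containment step $P_{\M'} = P_\M \cup Q$ with $P_\M \cap Q = F$ exactly: one must rule out that any face of $P_\M$ outside $F$ contributes new volume, and confirm that the apex lies at lattice height one over the base so that the cones are genuinely unimodular; both hinge on the single-violated-inequality analysis above. A cleaner but more computational alternative would be to apply Theorem \ref{thrm:main} to both $\M$ and $\M'$: the two lattices of cyclic flats differ only in that $H$ (a cyclic flat of $\M$) ceases to be cyclic in $\M'$, so the anchored-chain sums differ precisely in the terms running through $H$ and the volume difference telescopes to $\binom{n-1}{d}$. I would keep the geometric argument as the primary route, since it makes the binomial transparent.
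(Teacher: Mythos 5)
Your proposal is correct, but your primary route is genuinely different from the paper's. The paper proves this theorem purely as a corollary of Theorem \ref{thrm:main}: relaxing $H$ removes exactly the chain $\varnothing \subsetneq H \subsetneq \llbracket n \rrbracket$ from the poset of anchored chains of cyclic flats (no other proper nonempty cyclic flat is comparable with $H$, since proper subsets of the circuit $H$ are independent and the only flat strictly above the hyperplane $H$ is $\llbracket n \rrbracket$), so the two M\"obius sums differ only in the terms at the empty chain and at $\{H\}$, giving $\Vol(P_{\M'}) - \Vol(P_\M) = \frac{1}{n!}\bigl(\delta_\preceq(1^{d+1}0^{n-d}) - \delta_\preceq(1^{d}010^{n-d-1})\bigr) = \frac{1}{n!}\,\delta(1^{d+1}0^{n-d})$; this is exactly the ``cleaner but more computational alternative'' you sketch in your last paragraph. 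Your geometric argument stands on its own and is sound: the crux is the single-violated-inequality check, which you verify completely (independence of proper subsets of the circuit $H$ disposes of $|H\cap G| \le d$; maximality of the hyperplane $H$ disposes of $|H \cap G| = d+1$, the flat $G = \llbracket n \rrbracket$ being harmless), after which the beneath--beyond decomposition $P_{\M'} = P_\M \cup Q$, the identification $F = P_{\M|H} \times P_{\M/H} \cong \Delta_d \times \Delta_{n-d-1}$, and the staircase triangulation coned at lattice height one, giving $\Vol(Q) = \binom{n-1}{d}$, are all standard (\cite{ziegler-polytopes}, \cite{feichtner-sturmfels}). Comparing the two: your proof is self-contained and elementary, needs none of the M\"obius/cyclic-flat machinery, and exhibits the binomial coefficient transparently as the normalized volume of $\Delta_d\times\Delta_{n-d-1}$; it is essentially a one-step version of the ``peeling'' picture in Remark \ref{rem:van}. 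The paper's proof is two lines once Theorem \ref{thrm:main} is available and iterates directly to the sparse paving formula of Theorem \ref{thrm:spav}.

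Two caveats worth recording. First, connectivity of $\M$ is genuinely needed, and you are right to invoke it: for $\M = \U_{d,d+1}\oplus\U_{1,n-d}$ the set $H$ is a circuit-hyperplane but $P_\M$ coincides with the would-be facet $F$, the dimension jumps under relaxation, and the identity fails as stated; the paper's standing reduction to connected matroids covers this. Second, your computation proves the identity in the normalization where a unimodular simplex has volume $1$, which is how the theorem is literally stated; the paper's own proof produces the $1/n!$-scaled difference, an inconsistency internal to the paper rather than a defect of your argument.
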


This also implies that two matroid base polytopes have the same volume if they have a common relaxation (up to matroid isomorphism). In particular, the matroids $\mathsf{R}_6$ and $\mathsf{Q}_6$ both have a relaxation that is isomorphic to the matroid $\mathsf{P}_6$. This provides a structural explanation to why the base polytopes of matroids $\mathsf{R}_6$ and $\mathsf{Q}_6$ have the same volume.

\section{The Young poset on $(\llbracket n \rrbracket, d+1)$-Schubert matroids} \label{sec:young-poset}

Let us denote by $L_{d}(n)$ the set of all binary sequences of length $n+1$ with exactly $d+1$ 1s, that start with a 1 and end with a 0. We define the following partial order $\preceq$ on $L_{d}(n)$:
\begin{align}
    \mathbf{a} \preceq \mathbf{b} ~~\text{iff}~ \sum_{i=0}^j a_i \leq \sum_{i=0}^j b_i~~\forall j \in \llbracket n \rrbracket
\end{align}
Under this partial order, we have the top element $\hat{1}_{L_d(n)} = 1^{d+1} 0^{n-d}$ and the bottom element $\hat{0}_{L_d(n)} = 10^{n-d-1}1^{d}0$. Note that $L_d(n)$ is isomorphic to its dual under the involution that maps a binary sequence $\mathbf{b} = (b_0, b_1, \ldots,  b_n) \in L_d(n)$ to $\mathbf{b}^* = (b_0, b_{n-1}, b_{n-2}, \ldots, b_1, b_n)$. For a permutation $w= w_1 \ldots w_n$ of $[n]$, the \emph{binary descent sequence} $\des(w) = (d_0, d_1, \ldots, d_n)$ of $w$ is defined by
\begin{align}
    d_i &= \begin{cases}
    1~~~&\text{if}~w_{i} > w_{i+1}\\
    0~~~&\text{if}~w_{i} < w_{i+1}
    \end{cases}
\end{align}
for $i=1, \ldots, n-1$ where we set $d_0 = 1$ and $d_n=0$ for all permutations. Note that the descent sequence $\des(w)$ is an element of $L_d(n)$ for every permutation $w$. For each $\mathbf{a} \in L_d(n)$, let $\delta(\mathbf{a})$ denote the number of permutations on $[n]$ with descent sequence $\mathbf{a}$. 
\begin{lem} \label{lem:binomial}
For integers $n\geq d > 0$, we have
\begin{align}\label{eq:bin1}
\delta(1^{d+1}0^{n-d}) &= \binom{n-1}{d}
\end{align}
and
\begin{align}\label{eq:bin2}
\delta((10)^{d+1}) = E_{2d}    
\end{align}
where $E_{2d}$ is the $(2d$)th Euler number that counts the number of zigzag permutations on $[2d] := \{1, 2, \ldots, 2d\}$. 
\end{lem}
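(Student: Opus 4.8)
The plan is to read off, for each prescribed sequence $\mathbf{a}$, exactly which permutations $w = w_1 \cdots w_n$ of $[n]$ satisfy $\des(w) = \mathbf{a}$, and then count them directly. The key observation is that the entries $d_1, \ldots, d_{n-1}$ of $\des(w)$ record precisely the descent set of $w$, while $d_0 = 1$ and $d_n = 0$ are sentinels fixed for every permutation. Hence prescribing $\mathbf{a}$ amounts to prescribing the positions of the ascents and descents of $w$, i.e. the shape of its maximal monotone runs.

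For \eqref{eq:bin1}, the sequence $\mathbf{a} = 1^{d+1}0^{n-d}$ forces $d_1 = \cdots = d_d = 1$ and $d_{d+1} = \cdots = d_{n-1} = 0$; that is, $w$ has a descent at each of the positions $1, \ldots, d$ and an ascent at each of the positions $d+1, \ldots, n-1$. Equivalently, $w_1 > w_2 > \cdots > w_{d+1} < w_{d+2} < \cdots < w_n$, so $w$ is a strictly decreasing run of length $d+1$ followed by a strictly increasing run, the two runs sharing the entry $w_{d+1}$. Since $w_{d+1}$ lies below every other entry, it must equal the global minimum $1$. I would then count such $w$ by observing that the remaining values $\{2, \ldots, n\}$ are split between the decreasing arm (positions $1, \ldots, d$) and the increasing arm (positions $d+2, \ldots, n$), and that each arm is completely determined by its underlying set once monotonicity is imposed. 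Choosing which $d$ of the $n-1$ values in $\{2, \ldots, n\}$ form the decreasing arm therefore yields exactly $\binom{n-1}{d}$ permutations, proving \eqref{eq:bin1}.

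For \eqref{eq:bin2}, the sequence $(10)^{d+1}$ has length $2d+2$, so here $n = 2d+1$, and it forces $d_{2i-1} = 0$ and $d_{2i} = 1$ for $i = 1, \ldots, d$; that is, $w$ ascends at every odd position and descends at every even position, giving the zigzag condition $w_1 < w_2 > w_3 < w_4 > \cdots < w_{2d} > w_{2d+1}$. Thus the permutations counted by $\delta((10)^{d+1})$ are exactly the up-down (alternating) permutations of $[2d+1]$, whose number is by definition the Euler zigzag number, establishing \eqref{eq:bin2}.

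I expect the first identity to carry the bulk of the work: the main point to get right is the passage from the fixed sentinels $d_0, d_n$ together with the prescribed middle entries to the precise run structure, and the identification of the shared pivot $w_{d+1}$ as the global minimum, after which the enumeration collapses to a single binomial coefficient. The second identity is essentially definitional once the zigzag pattern is extracted, so the only care needed there is to track the length $n = 2d+1$ and reconcile it with the indexing convention adopted for the Euler numbers $E_{2d}$.
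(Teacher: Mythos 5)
Your treatment of \eqref{eq:bin1} is correct and coincides with the paper's own argument: the prescribed sequence forces $w_1 > \cdots > w_{d+1} < \cdots < w_n$, the shared pivot $w_{d+1}$ must equal $1$, and choosing the $d$ values of the decreasing arm from the remaining $n-1$ values yields $\binom{n-1}{d}$.

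For \eqref{eq:bin2}, however, there is a genuine gap, and it sits exactly at the step you deferred as ``essentially definitional.'' You correctly deduce that $(10)^{d+1}$ has length $2d+2$, hence $n = 2d+1$, and that the permutations being counted are the up-down permutations $w_1 < w_2 > w_3 < \cdots < w_{2d} > w_{2d+1}$ of $[2d+1]$. But the number of such permutations is the zigzag number $E_{2d+1}$ (a tangent number), not $E_{2d}$, which in the paper's stated convention counts zigzag permutations of $[2d]$ (a secant number). These genuinely differ: for $d=1$ the permutations of $[3]$ with descent sequence $1010$ are $132$ and $231$, so $\delta(1010) = 2 = E_3$, whereas $E_2 = 1$; for $d=2$ one gets $16 = E_5$ versus $E_4 = 5$. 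So the ``reconciliation of the indexing convention'' you hoped to carry out does not exist: your derivation, pushed to its honest conclusion, proves $\delta((10)^{d+1}) = E_{2d+1}$, which contradicts \eqref{eq:bin2} as stated. It is worth noting that your run-structure analysis is actually more careful than the paper's own proof, which writes the zigzag condition as terminating at $w_{2d}$, as though the permutation lived on $[2d]$ --- an off-by-one slip inconsistent with the sequence length forcing $n = 2d+1$; the right-hand side of \eqref{eq:bin2} should read $E_{2d+1}$. The takeaway: when a final identification involves an index mismatch, it must be checked rather than waved through, since here that unchecked step is precisely where the claimed identity fails.
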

\begin{proof}
For a permutation $w$ on $[n]$, the descent sequence $\des(w) = (1^{d+1}0^{n-d})$ if
\begin{align}
    w_1 > w_2 > \cdots > w_{d} > w_{d+1} < w_{d+2} < \cdots < w_n
\end{align}
Note that this implies $w_{d+1} = 1$, and now choosing any $d$ elements from $[n]$ to write in increasing order as $w_1, w_2, \ldots, w_d$ determine the complete permutation. This implies the Equation \eqref{eq:bin1}. For Equation \eqref{eq:bin2},observe that $\des(w) = (10)^{n+1}$ implies that
\begin{align}
    w_1 < w_2 > w_3 < w_4 \cdots > w_{2d-1} < w_{2d}
\end{align}
It is well known that such permutations are counted by the Euler number $E_{2d}$ (see Proposition 1.4.3 in \cite{stanley-I}).
\end{proof}

Furthermore, we denote 
\begin{align}
    \delta_\preceq(\mathbf{b}) &:=  \sum_{\mathbf{a} \preceq \mathbf{b}} \delta(\mathbf{a}) 
\end{align}
We state the following observation as a lemma:
\begin{lem}\label{lem:eulerian}
 For $\mathbf{b} = 1^{d+1}0^{n-d}$, we have $\delta_\preceq(\mathbf{b}) = A_{n, d}$, the $(n,d)$-Eulerian number; that is, the number of permutations on $[n]$ with $d$ descents.
\end{lem}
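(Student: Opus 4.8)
The plan is to recognize that $\mathbf{b} = 1^{d+1}0^{n-d}$ is precisely the top element $\hat{1}_{L_d(n)}$ of the poset, so that the defining sum for $\delta_\preceq(\mathbf{b})$ collapses into a sum over \emph{all} of $L_d(n)$, and then to read off that this total count is exactly the Eulerian number. First I would verify that every $\mathbf{a} \in L_d(n)$ satisfies $\mathbf{a} \preceq \mathbf{b}$. For the sequence $\mathbf{b} = 1^{d+1}0^{n-d}$ the partial sums are
\begin{align}
    \sum_{i=0}^{j} b_i &= \min(j+1,\, d+1),
\end{align}
and I would observe that for any binary sequence $\mathbf{a}$ of length $n+1$ with exactly $d+1$ ones, the partial sum $\sum_{i=0}^{j} a_i$ can be at most $j+1$ (there are only $j+1$ positions up to index $j$) and at most $d+1$ (there are only $d+1$ ones in total). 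Hence $\sum_{i=0}^{j} a_i \leq \min(j+1, d+1) = \sum_{i=0}^{j} b_i$ for every $j \in \llbracket n \rrbracket$, which is exactly the condition $\mathbf{a} \preceq \mathbf{b}$. This confirms that $\mathbf{b}$ dominates every element of $L_d(n)$.

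Consequently the index set of the sum defining $\delta_\preceq(\mathbf{b})$ is all of $L_d(n)$, so that
\begin{align}
    \delta_\preceq(\mathbf{b}) &= \sum_{\mathbf{a} \in L_d(n)} \delta(\mathbf{a}).
\end{align}
The next step is to interpret this quantity combinatorially. Since $\delta(\mathbf{a})$ counts permutations of $[n]$ with descent sequence $\mathbf{a}$, the displayed sum counts all permutations $w$ of $[n]$ whose descent sequence $\des(w)$ lies in $L_d(n)$. I would then recall the correspondence already set up before the lemma: for any permutation $w$ the entries $d_0 = 1$ and $d_n = 0$ are fixed, while the middle entries $d_1, \ldots, d_{n-1}$ record the descents of $w$, so $\des(w)$ has exactly $d+1$ ones precisely when $w$ has exactly $d$ descents. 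Thus a permutation $w$ contributes to the sum if and only if $w$ has exactly $d$ descents, and therefore $\delta_\preceq(\mathbf{b})$ equals the number of permutations of $[n]$ with $d$ descents, which is the Eulerian number $A_{n,d}$ by definition.

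I do not expect a genuine obstacle here, which is why the statement is phrased as an observation; the only point requiring a moment of care is the partial-sum domination argument establishing that $\mathbf{b}$ is the top element, together with checking that the map $w \mapsto \des(w)$ sends permutations with $d$ descents onto $L_d(n)$ with the bookkeeping of the two forced entries $d_0$ and $d_n$ handled correctly. Both are elementary, and once they are in place the identity $\delta_\preceq(\mathbf{b}) = A_{n,d}$ follows immediately.
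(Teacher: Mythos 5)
Your proposal is correct and follows essentially the same route as the paper's own proof: identify $\mathbf{b} = 1^{d+1}0^{n-d}$ as the top element of $L_d(n)$, so the sum runs over all of $L_d(n)$, then observe that descent sequences with $d+1$ ones correspond exactly to permutations with $d$ descents. The only difference is that you explicitly verify the top-element claim via the partial-sum bound $\sum_{i=0}^{j} a_i \leq \min(j+1, d+1)$, whereas the paper simply cites this fact as stated when the poset was introduced; this is a welcome bit of extra rigor but not a different argument.
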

\begin{proof}
Note that $\mathbf{b} = 1^{d+1}0^{n-d}$ is the top element of the poset $L_d(n)$, so $\delta_\preceq(\mathbf{b})$ equals the number of all permutations on $[n]$ whose descent sequence has $d+1$ ones. The number of ones in the descent sequence of a permutation $w$ is exactly one less than the number of descents of the permutation $w$. Therefore, $\delta_\preceq$ counts the number of permutations with exactly $d$ descents, which by definition equals the Eulerian number $A_{n,d}$.
\end{proof}
 With each binary sequence $\mathbf{b} = (b_0, b_1, \ldots, b_{n}) \in L_d(n)$, we can also associate a Schubert matroid $\Sc(\mathbf{b})$ of rank $d+1$ on $\llbracket n \rrbracket$, in the following manner: Starting from the empty set $\varnothing$, reading $\mathbf{b}$ from left to right, we add a coloop \emph{(coloop addition)} for each 1 and extend freely \emph{(free extension)} for each 0. We add the elements $0,1, \ldots, n$ in their natural order. This construction is also studied in \cite{billera-jia-reiner} with the roles of 0 and 1 reversed. We make the following observation:
  \begin{lem}\label{lem:Yisom}
 The set $L_{d}(n)$ under the partial order $\preceq$ is isomorphic to the Young lattice $L(\eta_{n, d})$ where $\eta_{n, d} = (\underbrace{n-d, \ldots, n-d}_{d})$ is the rectangle partition.
 \end{lem}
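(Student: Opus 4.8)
The plan is to produce an explicit order isomorphism by the standard lattice-path (staircase) encoding of binary words. The first observation is that the letters $b_0=1$ and $b_n=0$ are common to every element of $L_d(n)$, so they contribute the same amount to every partial sum $S_j(\mathbf{b}):=\sum_{i=0}^j b_i$ and therefore play no role in the order $\preceq$. I would discard them and work with the free middle word $(b_1,\dots,b_{n-1})$, a binary word of length $n-1$ with exactly $d$ ones and $n-1-d$ zeros. There are $\binom{n-1}{d}$ such words, which records $|L_d(n)|$ and serves as the consistency check that fixes the side lengths of the target rectangle; the free words are precisely the monotone lattice paths across that rectangle, equivalently the partitions $\mu \subseteq \eta_{n,d}$.

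Concretely, let the ones of the free word sit at positions $1\le p_1<\cdots<p_d\le n-1$, and attach to $\mathbf b$ the sequence $\lambda_i(\mathbf b) := (\text{number of zeros lying to the right of }p_i)$ for $i=1,\dots,d$. As $i$ increases, $p_i$ grows by at least one while the count of remaining ones drops by one, so $\lambda_1\ge\lambda_2\ge\cdots\ge\lambda_d\ge 0$ is a genuine partition, and one checks $\lambda_1\le n-1-d$, so $\lambda(\mathbf b)$ fits inside the rectangle. This assignment is clearly a bijection (the word is recovered from the staircase $\lambda$). The key verification is that it intertwines the two orders: $\mathbf a\preceq\mathbf b$ means $S_j(\mathbf a)\le S_j(\mathbf b)$ for all $j$, i.e. the $i$-th one of $\mathbf a$ is weakly to the right of the $i$-th one of $\mathbf b$ for every $i$, i.e. $p_i(\mathbf a)\ge p_i(\mathbf b)$ for all $i$. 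Since $\lambda_i$ is an affine decreasing function of $p_i$, this is exactly $\lambda_i(\mathbf a)\le\lambda_i(\mathbf b)$ for all $i$, that is $\lambda(\mathbf a)\subseteq\lambda(\mathbf b)$. Hence the bijection is order-preserving, giving $L_d(n)\cong L(\eta_{n,d})$.

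The step requiring the most care is purely the bookkeeping of conventions: choosing the partition (zeros to the right versus zeros to the left) so that the partial-sum order lands on containment rather than its opposite, and nailing down the exact side lengths of the rectangle. The second point is where the cardinality count $\binom{n-1}{d}$ is essential — it pins the rectangle down and guards against an off-by-one in the number of columns. As a safety net, even if one instead uses the order-reversing version of the map, the self-duality of $L_d(n)$ under the involution $\mathbf b\mapsto\mathbf b^*$ recorded earlier (together with the evident self-duality of a rectangular Young lattice) upgrades an anti-isomorphism back to an isomorphism, so the direction of the correspondence is never a genuine obstruction.
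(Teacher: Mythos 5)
Your proof is correct and follows essentially the same route as the paper's own: strip the forced outer letters $b_0=1$ and $b_n=0$, send $\mathbf{b}$ to the partition recording, for each remaining $1$, the number of zeros to its right, and check that partial-sum dominance corresponds to containment of partitions. If anything, your version is slightly more careful: you verify the order correspondence as an equivalence (the paper only checks that $\mathbf{a}\preceq\mathbf{b}$ implies $\lambda\subseteq\mu$ and then appeals to a bijection, which by itself does not yield an order isomorphism), and your cardinality bookkeeping makes explicit that the image is the lattice of partitions in a $d\times(n-d-1)$ box --- so the rectangle in the lemma's statement should have parts of size $n-d-1$ rather than $n-d$, an off-by-one that the paper's own proof shares.
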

 \begin{proof}
 The map $\varphi: L_d(n) \longrightarrow L(\eta_{n, d})$ is defined as follows: Let $\mathbf{b} \in L_d(n)$ and consider $\mathrm{supp}(\mathbf{b}) := \{i \in \mathbb{N}: b_i \neq 0, 0 < i < n\}$. If 
 \begin{align}
     \mathrm{supp}(\mathbf{b}) = \{i_1 < i_2 < \cdots < i_d\}
 \end{align}
 Then we defined $\lambda_j := \#\{k \in \mathbf{N}: b_k = 0, i_j < k < n \}$ for $j=1, \ldots, d$. This is the number of zeros to the right of $(j+1)$st 1, minus one. We consider the decreasing sequence $(\lambda_1, \ldots, \lambda_d)$ of non-negative numbers and suppress any zeros at the end. We denote the resulting integer partition by $\lambda$ and define $\varphi(\mathbf{b}) = \lambda$. Note that the map $\varphi$, by construction, is an injective map between two equicardinal sets. To show that it is a poset isomorphism, we take two sequences $\mathbf{a}, \mathbf{b} \in L_d(n)$ such that $\mathbf{a} \preceq \mathbf{b}$. Let $(\lambda_1, \ldots, \lambda_d)$ and $(\mu_1, \ldots, \mu_d)$ be the respective decreasing sequence of non-negative integers. By construction $\lambda_j \leq \mu_j$ for all $j=1,2, \ldots, d$, therefore $\lambda$ is below $\mu$ in the Young lattice $L(\eta_{n, d})$.
 \end{proof}
The importance of the partial order $\preceq$ on $L_d(n)$  in the context of matroid polytopes
 is evident from the result of Lam and Postnikov \cite{lam-postnikov-I}: if the Schubert matroid $\Sc(\mathbf{b})$ is connected, then its base polytope is of dimension $n$ whose normalised volume is given by
\begin{align}
    \mathrm{Vol}(P_{\Sc(\mathbf{b})}) &= \frac{\delta_\preceq (\mathbf{b})}{n!} \label{eq:ScVol}
\end{align}

\begin{example}
Note that for $\mathbf{b} = 1^{d+1}0^{n-d}$, the matroid $\Sc(\mathbf{b})$ is the uniform matroid $\U_{d+1, n+1}$. In this case, we have $\delta_\preceq(\mathbf{b}) = A_{n, d}$. Equation \ref{eq:ScVol} gives the number $\frac{1}{n!} A_{n,d}$ as the normalised volume of the hypersimplex $\triangle_{d+1, n+1}$.
\end{example}

Another well-known example of Schubert matroid is the $n$th Catalan matroid $\mathsf{C}_n$ studied in \cite{ardila-catalan}. This corresponds to the case when $n+1 = 2(d+1)$, and $\mathbf{b} = (01)^{d+1}$. The normalised volume of the matroid polytope $P_{\mathsf{C}_n}$ was calculated in \cite{bidkhori-sullivant} to equal the Eulerian number $\frac{1}{n!}\frac{1}{d}A_{n,d}$.

It is worth mentioning that the duality of $L_d(n)$ is not the same as matroid duality. Recall that for a matroid $\M$ and its dual $\M^*$, we have $\Vol(P_\M) = \Vol(P_{\M^*})$. For the duality of $L_d(n)$, we have the following proposition:
  
  \begin{prop}
  The function $\delta$ is invariant with respect to the duality of $L_d(n)$; that is, for all $\mathbf{b}$, $\delta(\mathbf{b}) = \delta(\mathbf{b}^*)$. In particular, if $G(\mathbf{x}) := \{\mathbf{y} \in L_{d}(n): \mathbf{y} \preceq \mathbf{x}\}$, then 
  \begin{align}
      \Vol(P_{\Sc(\mathbf{b})}) - \Vol(P_{\Sc(\mathbf{b}^*)}) = \sum_{\mathbf{a} \in G(\mathbf{b})} \bigg( \Vol(P_{\Sc(\mathbf{a})})    -  \Vol(P_{\Sc(\mathbf{a}^*)}) \bigg) \label{eq:dual}
  \end{align}
  \end{prop}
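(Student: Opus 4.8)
The substance of the proposition is the first assertion, $\delta(\mathbf{b}) = \delta(\mathbf{b}^*)$; the displayed identity \eqref{eq:dual} should then follow by substituting the volume formula and regrouping. I would prove the first assertion bijectively. Writing $D(\mathbf{b}) = \{\, i : 1 \le i \le n-1,\ b_i = 1 \,\}$, the quantity $\delta(\mathbf{b})$ counts the permutations $w$ of $[n]$ whose descent set $\{\, i : w_i > w_{i+1} \,\}$ equals $D(\mathbf{b})$, since the prescribed values $d_0 = 1$ and $d_n = 0$ carry no information. Because the involution $\mathbf{b} \mapsto \mathbf{b}^*$ fixes the two endpoints and reverses the middle coordinates, so that $b^*_i = b_{n-i}$ for $1 \le i \le n-1$, its effect on descent sets is $D(\mathbf{b}^*) = \{\, n-i : i \in D(\mathbf{b}) \,\}$. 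Thus the claim reduces to the statement that the number of permutations of $[n]$ with descent set $D$ equals the number with descent set $\{\, n-i : i \in D \,\}$.

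For this I would use the reverse--complement involution $w \mapsto w^{\mathrm{rc}}$ on permutations of $[n]$, defined by $w^{\mathrm{rc}}_i = n+1-w_{n+1-i}$. This map is an involution, hence a bijection, and a one--line computation shows that position $i$ is a descent of $w^{\mathrm{rc}}$ exactly when position $n-i$ is a descent of $w$: indeed $w^{\mathrm{rc}}_i > w^{\mathrm{rc}}_{i+1}$ is equivalent to $w_{n-i} > w_{n-i+1}$. Consequently $w \mapsto w^{\mathrm{rc}}$ restricts to a bijection between the permutations with descent set $D$ and those with descent set $\{\, n-i : i \in D \,\}$, giving $\delta(\mathbf{b}) = \delta(\mathbf{b}^*)$. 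The only point requiring attention is the bookkeeping at the two fixed endpoints, but since $*$ preserves $b_0$ and $b_n$ these never enter the descent set and cause no difficulty.

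To obtain the displayed identity I would first substitute the Lam--Postnikov formula \eqref{eq:ScVol}, replacing every term $\Vol(P_{\Sc(\cdot)})$ by $\tfrac{1}{n!}\delta_\preceq(\cdot)$; this turns \eqref{eq:dual} into a purely combinatorial statement about the order--sums of $\delta$. The key intermediate step is to re-express $\delta_\preceq(\mathbf{b}^*)$ as a sum over the principal \emph{up}-set of $\mathbf{b}$. This uses the observation recorded in the excerpt that $*$ is an order--reversing involution of $L_d(n)$ (since $L_d(n)$ is isomorphic to its dual): applying the change of variables $\mathbf{a} \mapsto \mathbf{a}^*$ in $\delta_\preceq(\mathbf{b}^*) = \sum_{\mathbf{a} \preceq \mathbf{b}^*} \delta(\mathbf{a})$ and invoking the first part $\delta(\mathbf{a}) = \delta(\mathbf{a}^*)$ yields $\delta_\preceq(\mathbf{b}^*) = \sum_{\mathbf{a} \succeq \mathbf{b}} \delta(\mathbf{a})$. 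In particular the left--hand difference becomes $\tfrac{1}{n!}\bigl(\sum_{\mathbf{a} \preceq \mathbf{b}} \delta(\mathbf{a}) - \sum_{\mathbf{a} \succeq \mathbf{b}} \delta(\mathbf{a})\bigr)$.

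It remains to expand the right--hand side of \eqref{eq:dual} in the same way: each inner volume $\Vol(P_{\Sc(\mathbf{a})})$ and $\Vol(P_{\Sc(\mathbf{a}^*)})$ is rewritten as an order--sum of $\delta$ over $G(\mathbf{a})$ and over its image under $*$, after which one interchanges the order of summation over $G(\mathbf{b})$ and compares with the expression above. I expect this interchange to be the main obstacle: because $*$ sends down--sets to up--sets, one must track the ranges of the resulting doubled sums with care, and it is precisely here that the order--reversing property of the involution and the symmetry $\delta(\mathbf{a}) = \delta(\mathbf{a}^*)$ have to be combined to verify that the regrouped sum reproduces the left--hand difference.
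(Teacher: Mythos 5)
Your bijective proof of the first assertion is correct, and it in fact repairs the paper's argument rather than reproducing it: the paper invokes the plain reversal $w_1\cdots w_n \mapsto w_n\cdots w_1$, but reversal exchanges descents with ascents while reversing positions, sending the descent set $D$ to $\{n-i : i \notin D\}$; it changes the number of descents and does not even map $L_d(n)$ to itself. The reverse--complement $w^{\mathrm{rc}}_i = n+1-w_{n+1-i}$ that you use is the involution that genuinely realises $D \mapsto \{n-i : i \in D\}$, i.e.\ $\des(w) \mapsto \des(w)^*$, so on this point your proof is the correct one.

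The step you flag as ``the main obstacle'' in the second half is, however, not a technical nuisance but a fatal obstruction, and you should trust the suspicion you voiced. Since $*$ is order-reversing, $\mathbf{a} \mapsto \mathbf{a}^*$ carries $G(\mathbf{b})$ onto the principal up-set $\{\mathbf{c} : \mathbf{c} \succeq \mathbf{b}^*\}$, not onto $G(\mathbf{b}^*)$, and no regrouping can repair this, because Equation \eqref{eq:dual} is false as stated. Take $n=3$, $d=1$, so that $L_1(3)=\{1100,1010\}$ with $1010 \prec 1100$ and $1100^*=1010$; by \eqref{eq:ScVol}, $\Vol(P_{\Sc(1100)})=4/3!$ and $\Vol(P_{\Sc(1010)})=2/3!$. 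For $\mathbf{b}=1100$ the left-hand side of \eqref{eq:dual} equals $1/3$, while the right-hand side is $\bigl(\tfrac{4}{6}-\tfrac{2}{6}\bigr)+\bigl(\tfrac{2}{6}-\tfrac{4}{6}\bigr)=0$ (omitting the term $\mathbf{a}=\mathbf{b}$ gives $-1/3$, still wrong). The failure is structural: for $\mathbf{b}=\hat{1}_{L_d(n)}$ one has $G(\hat{1}_{L_d(n)})=L_d(n)$, so the right-hand side vanishes because $\mathbf{a}\mapsto\mathbf{a}^*$ permutes $L_d(n)$, whereas the left-hand side is $\bigl(A_{n,d}-\binom{n-1}{d}\bigr)/n! > 0$ whenever $|L_d(n)|\geq 2$. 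The paper's own proof founders on exactly this point: its final ``rearrangement'' tacitly replaces $\sum_{\mathbf{a}\in G(\mathbf{b}^*)}\Vol(P_{\Sc(\mathbf{a})})$ by $\sum_{\mathbf{a}\in G(\mathbf{b})}\Vol(P_{\Sc(\mathbf{a}^*)})$, which is precisely the illegitimate interchange you identified. What your computation does establish, and what should replace \eqref{eq:dual}, is the identity $n!\,\bigl(\Vol(P_{\Sc(\mathbf{b})})-\Vol(P_{\Sc(\mathbf{b}^*)})\bigr) = \sum_{\mathbf{a}\preceq\mathbf{b}}\delta(\mathbf{a}) - \sum_{\mathbf{a}\succeq\mathbf{b}}\delta(\mathbf{a})$.
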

  \begin{proof}
Recall that $\delta(\mathbf{b})$ counts the number of permutations with descent sequence $\mathbf{b}$. The \emph{reversal involution} on permutations that maps a permutation $w = w_1w_2\cdots w_n$ to $w_nw_{n-1}\cdots w_n $ maps a permutation of descent sequence $\mathbf{b}$ to one with the descent sequence $\mathbf{b}^*$. This map being an involution shows that $\delta(\mathbf{b}) = \delta(\mathbf{b}^*)$. Now notice that from Equation \eqref{eq:ScVol} it follows that $\delta(\mathbf{b}) = \delta(\mathbf{b}^*)$, can be rewritten as
  \begin{align}
      \Vol(P_{\Sc(\mathbf{b})}) - \sum_{\mathbf{a} \in G(\mathbf{b})} \Vol(P_{\Sc(\mathbf{a})}) =  \Vol(P_{\Sc(\mathbf{b}^*)}) - \sum_{\mathbf{a} \in G(\mathbf{b}^*)} \Vol(P_{\Sc(\mathbf{a})})
  \end{align}
  This equation can then be rearranged to give Equation \eqref{eq:dual}.
  \end{proof}

\section{Cyclic flats and Schubert matroids}\label{sec:cycflat}

Recall that a cyclic flat of a connected matroid $\M$ is a flat $F$ for which the restriction $\M|F$ is connected. Note that in the case of loopless matroid the empty set $\varnothing$ and the underlying set $\llbracket n \rrbracket$ are cyclic flats. We call a chain of subsets (or flats) $\mathcal{F} : F_0 \subsetneq F_1 \subsetneq \ldots F_{k} \subseteq F_{k+1}$ of $\llbracket n 
\rrbracket$ \emph{anchored} if $F_0 = \varnothing$ and $F_{k+1} = \llbracket n \rrbracket$. Let $\mathfrak{F}_n$ be the collection of all anchored chains of subsets of $\llbracket n \rrbracket$. Given a loopless connected matroid $\M$ on $\llbracket n \rrbracket$ of rank $d+1$, we define a map $\mathfrak{F}_n \rightarrow L_d(n)$ that maps $\mathcal{F} \longmapsto \mathbf{b}_{\mathcal{F}}$ given by
\begin{align}
    \mathbf{b}_\mathcal{F} &= \underbrace{1 \ldots 1}_{\rho_\M(F_1)} \underbrace{0 \ldots 0}_{|F_1| - \rho_\M(F_1)} \ldots \underbrace{1 \ldots 1}_{(d+1) -\rho_\M(F_k)} \underbrace{0 \ldots 0}_{n+1 - \rho(F_k)} 
\end{align}
For a binary sequence $\mathbf{b}$, let $b_{i_j}$ be the $j$th occurrence of a 0 followed by a 1. We define a map $L_d(n) \rightarrow \mathfrak{F}_n$ that maps $\mathbf{b} \longmapsto \mathcal{F}_{\mathbf{b}}$, given by
\begin{align}
    \mathcal{F}_\mathbf{b}: \varnothing \subsetneq F_1 \subsetneq \cdots  \subsetneq F_{t} \subsetneq \llbracket n \rrbracket
\end{align}
where $F_{j} = \{ 0, 1, \ldots, i_j\}$. Note that, for a fixed matroid $\M$ these maps are inverse of each other.

\begin{lem}\label{lem:flagschu}
The Schubert matroid $\Sc(\mathbf{b})$ has a unique maximal chain of cyclic flats given by $\mathcal{F}_\mathbf{b}$. Furthermore, if $\M$ is a  loopless connected matroid on $\llbracket n \rrbracket$ of rank $d+1$ with a unique maximal chain of cyclic flats $\mathcal{F}$, then $\M \cong \Sc(\mathbf{b}_\mathcal{F})$.
\end{lem}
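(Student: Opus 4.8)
The plan is to read off the cyclic flats directly from the two construction steps defining $\Sc(\mathbf{b})$ and to exploit the ``shifted'' exchange property they produce. As a warm-up I would record the effect of the operations: reading a $1$ adjoins a coloop (a direct summand $\U_{1,1}$), while reading a $0$ is a free extension, which adjoins an element $e$ that is spanned by a set $X$ exactly when $X$ already has full rank at that stage. From this the prefix $P_i:=\{0,1,\dots,i\}$ has rank $\sum_{k\le i}b_k$, and, tracing connectivity through the construction, each free extension over a loopless matroid of rank $\ge 1$ yields a connected matroid, since the new element lies in a circuit $B\cup e$ together with every basis $B$. As every $\mathbf{b}\in L_d(n)$ ends in $0$, this shows $\Sc(\mathbf{b})$ is connected and loopless, so talking about its cyclic flats is legitimate.

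The heart of the first assertion is that \emph{every cyclic flat is a prefix}. From the basis description of $\Sc(\mathbf{b})$ that the construction yields (writing $B=\{e_0<\dots<e_d\}$ and letting $p_i$ be the position of the $(i{+}1)$st $1$ of $\mathbf{b}$, $B$ is a basis iff $e_i\ge p_i$ for all $i$) I get the exchange property: if $I$ is independent, $x\in I$, $y>x$ and $y\notin I$, then $(I\setminus x)\cup y$ is independent, because enlarging an element only relaxes the inequalities $e_i\ge p_i$. This yields the closure statement that if $y\in\mathrm{cl}(S)$, $x<y$ and $x\notin S$, then $x\in\mathrm{cl}(S)$; hence every closure, in particular every flat, is a prefix. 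If $F$ were a cyclic flat containing some $x$ but missing a smaller $y$, then coloop-freeness gives $x\in\mathrm{cl}(F\setminus x)$, and the closure statement forces $y\in\mathrm{cl}(F\setminus x)\subseteq F$, a contradiction. It then remains to decide which prefixes qualify: $P_i$ is a flat iff $i{+}1\notin\mathrm{cl}(P_i)$, i.e.\ iff $b_{i+1}=1$, and $\M|P_i=\Sc(b_0\cdots b_i)$ is connected iff its last letter is $0$ (when the last letter is $1$ that element is a coloop). Thus the proper nonempty cyclic flats are exactly the $P_{i_j}$ with $b_{i_j}=0$ and $b_{i_j+1}=1$, which together with $\varnothing$ and $\llbracket n\rrbracket$ are precisely the terms of $\mathcal{F}_\mathbf{b}$; being prefixes they form a chain, so $\mathcal{F}_\mathbf{b}$ is the unique maximal chain of cyclic flats.

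For the converse I would first observe that a bounded poset has a unique maximal chain iff it is totally ordered, so the hypothesis means the cyclic flats of $\M$ form a chain $\varnothing=Z_0\subsetneq\dots\subsetneq Z_m=\llbracket n\rrbracket$ with ranks $r_0<\dots<r_m$. Relabel the ground set so that the blocks $Z_j\setminus Z_{j-1}$ occupy consecutive intervals; then $\mathbf{b}_\mathcal{F}$ is the word whose $j$th block has $r_j-r_{j-1}$ ones followed by $(|Z_j|-|Z_{j-1}|)-(r_j-r_{j-1})$ zeros. Here I would use that each $Z_j$ is coloop-free: a short rank computation shows that if a block had no trailing zero then the new elements would be coloops of $\M|Z_j$, so every block does end in a zero, $\mathbf{b}_\mathcal{F}\in L_d(n)$, and by the first part the relabelled $\Sc(\mathbf{b}_\mathcal{F})$ has the \emph{same} chain of cyclic flats, with the same sizes and ranks, as $\M$. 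I would then finish by invoking that a matroid is determined by its cyclic flats together with their ranks (Bonin--de Mier), via $r(X)=\min_Z\big(r(Z)+|X\setminus Z|\big)$: both matroids feed identical data into this formula, so they coincide and $\M\cong\Sc(\mathbf{b}_\mathcal{F})$.

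I expect the main obstacle to be this converse --- converting ``cyclic flats form a chain'' into an actual identification of $\M$. The first part is essentially a computation once the exchange property is in hand, whereas the converse needs the structural input that the cyclic flats and their ranks pin down the matroid; the real work there is checking that the relabelling identifies the two chains of cyclic flats on the nose (same sets, sizes and ranks), for which the coloop-free trailing-zero argument above is the crucial ingredient. A minor but genuine nuisance in both directions is the treatment of degenerate restrictions: one must insist that a lone coloop counts as non-cyclic (consistent with coloop-freeness), so that no spurious cyclic flat such as $\{0\}$ appears when $\mathbf{b}$ begins with $11$.
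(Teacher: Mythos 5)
Your proof is correct and follows the same overall strategy as the paper's: first identify the cyclic flats of $\Sc(\mathbf{b})$ as exactly the members of $\mathcal{F}_\mathbf{b}$, then settle the converse by invoking the fact that a matroid is determined by its cyclic flats together with their ranks (the paper cites Brylawski for this; your Bonin--de Mier rank formula $r(X)=\min_Z\big(r(Z)+|X\setminus Z|\big)$ is the same structural input). Where you genuinely diverge is the implementation of the first part: the paper argues directly on circuits --- taking the maximal element $k$ of a candidate set $S$ and splitting into the cases $b_k=1$ (coloop, so not cyclic) and $b_k=0$ (circuits through $k$ span a prefix) --- whereas you first derive the basis criterion $e_i\ge p_i$, extract from it the shifted exchange property, and use that to show every cyclic flat is down-closed, i.e.\ a prefix, before sorting out which prefixes qualify. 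Your route is more rigorous at exactly the point where the paper is tersest (its claim that $k$ lies only in circuits containing every index below $j$ is not literally correct), and your converse also fills two gaps the paper glosses over: the trailing-zero/coloop-freeness check that $\mathbf{b}_\mathcal{F}$ really lies in $L_d(n)$, and the explicit relabeling that turns equality of cyclic-flat data into the isomorphism $\M\cong\Sc(\mathbf{b}_\mathcal{F})$.

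Two local slips, neither fatal. First, your closure statement needs the extra hypothesis $y\notin S$: as written it fails for $S=\{2\}$, $y=2$, $x=1$ in $\Sc(1010)$. Every place you apply it does satisfy $y\notin S$ (in the cyclic-flat step the role of $y$ is played by $x\in\mathrm{cl}(F\setminus x)\setminus(F\setminus x)$), so nothing downstream breaks. Second, the assertion that \emph{every} flat is a prefix is false --- in $\Sc(1010)$ the set $\{2\}$ is a flat but not a prefix --- but you never use it; your argument only needs that every \emph{cyclic} flat is a prefix, which your coloop-freeness argument proves correctly. You should simply delete that aside and add the missing hypothesis.
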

\begin{proof}
Given $\mathbf{b} = (b_0, b_1, \ldots, b_n) \in L_d(n)$, consider the flag of sets $\mathcal{F}_\mathbf{b}$. By construction of $\Sc(\mathbf{b})$ described in the paragraph, all $F_j$ in $\mathcal{F}_\mathbf{b}$ are cyclic flats. Note that $\varnothing$ and $\llbracket n \rrbracket$ are also cyclic flats of $\Sc(\mathbf{b})$. Consider any subset $S \subseteq \llbracket n \rrbracket$, and let $k$ be its maximal element with respect to the usual order on $\llbracket n \rrbracket$ . The coordinate $b_k$ is either a $0$ or $1$. If $b_k = 1$, this implies $k$ is a coloop and hence $S$ is no longer cyclic. If $b_k = 0$, let $i$ be the maximal index less than $k$ such that $b_i = 1$ and let $j$ be the minimal index greater than $k$ such that $b_j = 1$. Then $k$ belongs (only) to circuits that contain every index below $j$. Any of these circuits spans the set $\{0,1, \ldots, i, \ldots, k, \ldots, j-1 \}$, which is a cyclic flat. Hence, $S$ is one of the flats in $\mathcal{F}_\mathbf{b}$. This implies that 
\begin{align}
    \varnothing \subsetneq F_1 \subsetneq \cdots \subsetneq F_t \subseteq \llbracket n \rrbracket
\end{align}
is the unique maximal chain of cyclic flats of $\Sc(\mathbf{b})$. 

The Schubert matroid $\Sc(\mathbf{b}_\mathcal{F})$ has a unique maximal chain of cyclic flats given by $\mathcal{F}$ with the rank function prescribed by the matroid $\M$. Hence, $\M$ and $\Sc(\mathbf{b})$ have the same collection of cyclic flats with the same assigned ranks. Since the collection of cyclic flats along with their ranks uniquely determine a matroid (see \cite{brylawski}). Therefore, $\M$ and $\Sc(\mathbf{b}_\mathcal{F})$ must be isomorphic.

\end{proof}

\begin{proof}[Proof of Theorem \ref{thrm:main}]
For each anchored chain $\mathcal{F}$ of cyclic flats, we consider the polytope
\begin{align}
    P_\mathcal{F} :=  \bigg( \bigcap_{F \in \mathcal{F}} h_F^\leq \bigg) \cap \triangle_{d+1, n+1} 
\end{align}
where by definition $h_F^\leq := \{(x_0, \ldots, x_n) \in \mathbb{R}^{n+1}: \sum_{i \in F} x_i \leq \rho_\M(F)\}$, and $\rho_\M$ denotes the rank function of the matroid $\M$. We recall the halfspace-intersection description of matroid polytopes:
\begin{align}
    P_\M &=  \bigg( \bigcap_{F \in L(\M)} h_F^\leq \bigg) \cap \triangle_{d+1, n+1} 
\end{align}
where $L(\M)$ denote the lattice of flats of $\M$. We can reduce the index set of intersection to the set $L(\M)_{> 1}$ of flats of $\M$ of size strictly greater than 1. This is because the $\triangle_{d+1, n+1} \subseteq \cap_{|F| = 1} h_F^{\leq}$. Feichtner and Sturmfels \cite{feichtner-sturmfels} gave an irredundant description of $P_\M$ for a connected matroid $\M$, where the intersection varies over flats $F$ such that the restriction $\M|F$ and the contraction $\M/F$ both are connected. All such flats of size 2 or bigger are cyclic flats. In particular for Schubert matroid $\Sc(\mathbf{b}_\mathcal{F})$, by Lemma \ref{lem:flagschu}, we have
\begin{align}
P_\mathcal{F} = P_{\Sc(\mathbf{b}_{\mathcal{F}})} \label{eq:schpoly}
\end{align}
In this notation, $P_{\varnothing \subsetneq \llbracket n \rrbracket} = \triangle_{d+1, n+1}$.

The set of all anchored chains of cyclic flats of $\M$ is naturally ordered by refinement (i.e. $\mathcal{F} \leq \mathcal{G}$ if $\mathcal{F}$ refines $\mathcal{G}$), and we denote this poset by $\Gamma$. Note that the poset $\Gamma$ is a lattice with the top element $\varnothing$, the empty chain, and is without a bottom element. Let $\hat{\Gamma}$ be the lattice $\Gamma$ along with a bottom element $\hat{0}_\Gamma$ adjoined. The M\"obius function of $\Gamma$ evaluated at $\mathcal{F}$ is defined by the equation $\mu_\Gamma(\mathcal{F}) := - \mu_{\hat{\Gamma}}(\hat{0}_\Gamma, \mathcal{F})$. Let $[P_\mathcal{F}]$ be the indicator function of the polytope $P_\mathcal{F}$; that is,
\begin{align}
    [P_\mathcal{F}]: \mathbb{R}^{n+1} &\longrightarrow \mathbb{R} \\
    [P_\mathcal{F}](\mathbf{x}) &= \begin{cases}
    1 ~~~&\text{if}~\mathbf{x}\in P_\mathcal{F},\\
    0~~~&\text{otherwise}
    \end{cases}
\end{align}
We consider the M\"obius sum $\chi_\M$ of these indicator functions over $\Gamma$:
\begin{align}
    \chi_\M = \sum_{\mathcal{F} \in \Gamma} \mu(\mathcal{F})[P_\mathcal{F}] \label{eq:altsum}
\end{align}
It lives in the $\mathbb{R}$-vector space generated by indicator functions of polytopes (see \cite{lawrence}). Evaluation of $\chi_\M$ on points in $P_\M$ amounts to inclusion-exclusion over the Schubert matroids whose intersection is $P_\M$. Therefore, the sum $\chi_\M$ equals $1$ at points inside $P_\M$. 

We also show that the evaluation of $\chi_\M$ on points in $\triangle_{n+1, d+1} \backslash P_\M$ gives 0. First note that if $\mathcal{F} \leq \mathcal{G}$, then $P_\mathcal{F} \subseteq P_\mathcal{G}$. This is because we get $P_\mathcal{G}$ from $P_\mathcal{F}$ be relaxing some of the half-space intersection conditions. This implies that $[P_\mathcal{F}](\mathbf{x}) = 1$ for some $\mathbf{x} \in \mathbb{R}^d$ implies $[P_\mathcal{G}](\mathbf{x}) = 1$ for all $\mathcal{G} \geq \mathcal{F}$. So the sum on the right-hand side in Equation \eqref{eq:altsum} reduces to the M\"obius sum over a proper order filter of $\Gamma$. 

Let us denote the order filter by $R$. The sum $\chi_\M(\mathbf{x})$ for $\mathbf{x} \in \triangle_{n+1, d+1} \backslash P_\M$ equals $ \sum_{\mathcal{F} \in R} \mu(\mathcal{F})$. Hampe showed that this sum equals $0$ (see Equation (1) and Equation (2) of \cite{hampe-matroid}). Now note that $\chi_\M$ is zero outside the hypersimplex $\triangle_{n+1, d+1}$. Integrating $\chi_\M$ with respect to the Jordan measure gives
\begin{align}
    \Vol(P_\M) &= \sum_{\mathcal{F} \in \Gamma} \mu(\mathcal{F}) \frac{\delta_\preceq(\mathbf{b}_\mathcal{F})}{n!}
\end{align}
\end{proof}

\begin{remark}
This should be seen in contrast to Theorem 3.3 in \cite{ardila-benedetti-doker}, where the sum is over all the ordered collections of coconnected flats satisfying the \emph{Dragon marriage condition} (due to \cite{postnikov}). 

We recall that a coconnected flat of $\M$ is a flat $F$ such that the contraction $\M/F$ is connected. In lattice-theoretic terms, a flat $F$ is cyclic if the interval $[\varnothing, F]$ is connected (as a subposet of lattice of flats), and a flat $F$ is coconnected if the interval $[F, \llbracket n \rrbracket]$ is connected. It would be interesting to know further the relation between Theorem \ref{thrm:main} and Theorem 3.3 of \cite{ardila-benedetti-doker}.
\end{remark}

\section{Matroid relaxation and sparse paving matroid polytopes} \label{sec:sparelax}
Given a representable matroid $\M$ as a collection of vectors in a vector space over $\mathbb{F}$. A hyperplane in $\M$ might exists as a consequence of this representation. Relaxing such a hyperplane might result in a non-representable matroid (over $\mathbb{F}$). Examples of matroids obtained via this construction include the non-Fano matroid $\mathsf{F}_7^-$ (from the Fano matroid), the non-Pappus $\mathsf{N}_9^-$ (from the Pappus matroid). A hyperplane $H$ of a matroid $\M$ is called a \emph{circuit-hyperplane} if it is also a circuit (of $\M$).

\begin{proof}[Proof of Theorem \ref{thrm:relaxation}]
Note that if $\M'$ is a relaxation of $\M$ with respect to $H$, then $\Gamma(\M')$ consists of exactly one more element than $\Gamma(\M)$, namely $H$. Furthermore, $H$ is incomparable to every element of $\Gamma(\M)$ except $\varnothing$. Hence, M\"obius function of $\Gamma(\M')$ have the same value as M\"obius function of $\Gamma(\M)$, except at $\varnothing$ and $\{H\}$. Therefore using Equation \eqref{eq:main}, we will have
\begin{align}\label{eq:comp}
    \Vol(P_{\M'}) - \Vol(P_\M) &= \Vol(P_{\varnothing}) - \Vol(P_{\{ H\} })\\
    &= \frac{1}{n!} \bigg[ \delta_\preceq(1^{d+1}0^{n-d}) - \delta_\preceq(1^d010^{n-d-1})  \bigg]
\end{align}
Since $\delta_\preceq(1^{d+1}0^{n-1}) = \delta_\preceq(1^d010^{n-d-1}) + \delta(1^{d+1}0^{n-d})$, the above expression simplifies as
\begin{align}
    \Vol(P_{\M'})  - \Vol(P_\M) &= \frac{\delta(1^{d+1}0^{n-d})}{n!} 
\end{align}
\end{proof}

Theorem \ref{thrm:relaxation} also implies that if two matroids have isomorphic relaxations, then the volumes of their base polytopes are equal. This is the case with the matroids $\mathsf{R}_6$ and $\mathsf{Q}_6$. It also implies that if two matroids are relaxations of the same matroid, then the volumes of base polytopes of these relaxations are equal. This is the case with the matroids $\mathsf{R}_8$ and $\mathsf{F}_8$.

 A matroid $\M$ of rank $d+1$ on the ground set $\llbracket n \rrbracket$ is called \emph{sparse paving} if each subset of $\llbracket n \rrbracket$ of size $d+1$ is either a basis or a circuit-hyperplane.

\begin{proof}[Proof of Theorem \ref{thrm:spav}]
Since the only cyclic flats of $\M_\alpha$ are the circuit-hyperplanes, say $\{H_i\}_{i=1}^\alpha$. The lattice of cyclic flats of $\M_\alpha$ is a rank $2$ lattice with atoms $H_i$'s. Now using Equation \eqref{eq:main}, we get
\begin{align}
    \Vol(P_{\M_\alpha}) &= \frac{1}{n!} \bigg[ \sum_{i=1}^\alpha  \delta_\preceq (1^d010^{n-d-1}) - (\alpha-1) \delta_{\preceq}(1^{d+1}0^{n-d}) \bigg]
\end{align}
Since $\delta_\preceq(1^{d+1}0^{n-1}) = \delta_\preceq(1^d010^{n-d-1}) + \delta(1^{d+1}0^{n-d})$, the above expression simplifies to
\begin{align}
    \Vol(P_\M) &= \frac{1}{n!} \bigg[ - \alpha \delta(1^{d+1}0^{n-d}) + \delta_\preceq(1^{d+1}0^{n-d}) \bigg]
\end{align}
By Lemma \ref{lem:binomial} and Lemma \ref{lem:eulerian}, we have
\begin{align}
    &= \frac{1}{n!} \bigg[ -\alpha \binom{n-1}{d} + A_{n,d}\bigg]
\end{align}
\end{proof}
This proof can also be seen as an inductive application of Theorem \ref{thrm:spav}. Since we can take a connected sparse paving matroid polytope with $\alpha$ circuit-hyperplanes, and relax each circuit-hyperplane one by one, to get a uniform matroid.

\begin{example}\label{eg:MK4}
Consider $\M({K_4})$, the complete graphic matroid on $\llbracket 5 \rrbracket$. It is a connected sparse paving matroid of rank $d+1 = 3$ with $\alpha= 4$ hyperplanes. In this case, the volume is given by
\begin{align}
    \Vol(P_{\M(K_4)}) &= \frac{1}{5} (A_{5,2} - 4\binom{4}{2})\\
    &= \frac{1}{5}(66 - 4\cdot 6) = \frac{42}{5!}
\end{align}
\end{example}

\begin{remark}\label{rem:van}
Connected sparse paving matroids are split matroids in the sense of \cite{joswig-schroter}. They showed that the split flacets of such a matroid $\M$ are the same as the faces defined by cyclic flats. These faces can be written as $h_F \cap \triangle_{d+1, n+1}$ where $F$ is a cyclic flat. This implies that we get the polytope $P_\M$ by peeling off the polytopes $h_F^{\geq} \cap \triangle_{d+1, n+1}$, each of which has volume $\frac{1}{n!}\binom{n-1}{d}$. Therefore, their result also implies Theorem \ref{thrm:spav},
\end{remark}

\proof[Acknowledgements] \label{ack}
I would like to thank my supervisor, Graham Denham for his encouragement and helpful comments. And also for pointing out to me papers of Ardila, Benedetti and Doker \cite{ardila-benedetti-doker}, and Ardila, Fink and Rincon \cite{ardila-fink-rincon}. I would also like to thank Federico Ardila for encouragement and guiding comments. We also thank Jorn van der Pol for pointing out the paper of Joswig and Schr\"oter \cite{joswig-schroter} along with the Remark \ref{rem:van}.


\begin{thebibliography}{10}

\bibitem[Ard03]{ardila-catalan}
{Ardila, Federico},
 \emph{The {C}atalan matroid},
 {J. Combin. Theory Ser. A},
 {\bf 104}, {(2003)}, {no.~1}, {49--62}, \url{https://doi.org/10.1016/S0097-3165(03)00121-3}.
 
 \bibitem[ABD10]{ardila-benedetti-doker}
{Ardila, Federico and Benedetti, Carolina and Doker, Jeffrey},
\emph{Matroid polytopes and their volumes},
{Discrete Comput. Geom.},
{\bf 43}, (2010), no.~4, 841--854, \url{https://doi.org/10.1007/s00454-009-9232-9}.

\bibitem[AFR10]{ardila-fink-rincon}
{Ardila, Federico and Fink, Alex and Rinc\'{o}n, Felipe},
\emph{Valuations for matroid polytope subdivisions},
{Canad. J. Math.}, 
{\bf 62}, (2010), no.~6, 1228--1245, \url{https://doi.org/10.4153/CJM-2010-064-9}.

 
\bibitem[AK06]{ardila-klivans}
{Ardila, Federico and Klivans, Caroline J.},
\emph{The {B}ergman complex of a matroid and phylogenetic trees},
{J. Combin. Theory Ser. B},
{\bf 96}, {(2006)}, {no.~1}, {38--49}, \url{https://doi.org/10.1016/j.jctb.2005.06.004}.
\bibitem[BS11]{bidkhori-sullivant}
{Bidkhori, Hoda and Sullivant, Seth},
\emph{Eulerian-{C}atalan numbers},
{Electron. J. Combin.},
{\bf 18},
{(2011)},
{no.~1},
{187, 10}.

\bibitem[Bry75]{brylawski}
{Brylawski, Thomas H.},
\emph{An affine representation for transversal geometries},
{Studies in Appl. Math.},
{\bf 54},
{(1975)},
{bo.~2},
{143--160},
\url{https://doi.org/10.1002/sapm1975542143}.

\bibitem[BJR09]{billera-jia-reiner}
 {Billera, Louis J. and Jia, Ning and Reiner, Victor},
\emph{A quasisymmetric function for matroids},
{European J. Combin.},
{\bf 30}, {(2009)}, {no.~8}, {1727--1757}, \url{https://doi.org/10.1016/j.ejc.2008.12.007}.


\bibitem[DF10]{derksen-fink}
Derksen, Harm and Fink, Alex,
\emph{Valuative invariants for polymatroids},
Adv. Math.
{\bf 225}, (2010), no.~4, 1840--1892, \url{https://doi.org/10.1016/j.aim.2010.04.016}.

\bibitem[FS05]{feichtner-sturmfels}
{Feichtner, Eva Maria and Sturmfels, Bernd},
\emph{Matroid polytopes, nested sets and {B}ergman fans},
{Port. Math. (N.S.)},
{\bf 62}, {(2005)}, {no.~4}, {437--468}.

\bibitem[Ham17]{hampe-matroid}
 {Hampe, Simon},
 \emph{The intersection ring of matroids},
 {J. Combin. Theory Ser. B},
{\bf 122}, {(2017)}, {578--614}, \url{https://doi.org/10.1016/j.jctb.2016.08.004}.



\bibitem[JS17]{joswig-schroter}
{Joswig, Michael and Schr\"{o}ter, Benjamin},
\emph{Matroids from hypersimplex splits},
{J. Combin. Theory Ser. A},
{\bf 151}, {(2017)}, 254--284, \url{https://doi.org/10.1016/j.jcta.2017.05.001}.

\bibitem[L06]{lawrence}
{Lawrence J.}
\emph{Three Rings of Polyhedral Simple Functions},
{J Res Natl Inst Stand Technol},
{(2006)},
{no.~111(2)},
{127–134},
\url{doi:10.6028/jres.111.012}.



\bibitem[LP07]{lam-postnikov-I}
{Lam, Thomas and Postnikov, Alexander},
\emph{Alcoved polytopes. {I}},
{Discrete Comput. Geom.},
{\bf 38}, {(2007)}, {no.~3}, {453--478}, \url{https://doi.org/10.1007/s00454-006-1294-3}.


\bibitem[Ox11]{oxley-matroid}
{Oxley, James},
\emph{Matroid theory},
 {Oxford University Press, Oxford}, {2011}.

 \bibitem[Pos09]{postnikov}
{Postnikov, Alexander},
\emph{Permutohedra, associahedra, and beyond},
{Int. Math. Res. Not. IMRN},
{(2009)}, {no.~6}, {1026--1106}, \url{https://doi.org/10.1093/imrn/rnn153}.






\bibitem[St12]{stanley-I}
{Stanley, Richard P.},
\emph{Enumerative combinatorics. {V}olume 1},
{Cambridge Studies in Advanced Mathematics},
{\bf 49},
{Second ed.},
{Cambridge University Press, Cambridge},
{(2012)},.



\bibitem[Zei95]{ziegler-polytopes}
 {Ziegler, G\"unter M.},
 \emph{Lectures on polytopes},
 {Springer-Verlag, New York}, {1995}.

\end{thebibliography}
\end{document}